\newtheorem{proposition}{Proposition}[section]
\newtheorem{theorem}[proposition]{Theorem}
\newtheorem{lemma}[proposition]{Lemma}
\newtheorem{corollary}[proposition]{Corollary}
\theoremstyle{remark} \theoremstyle{definition}
\newtheorem{example}{Example}
\newtheorem{remark}{Remark}
\newtheorem{definition}[proposition]{Definition}
\newcommand {\T}  {{\mathcal T}}
\numberwithin{equation}{section}
\title[Cut sets of IFS]
{On Cut Sets of Attractors of Iterated Function Systems}
\author{Beno\^it Loridant}
\thanks{The authors were supported by the project P22855 of the FWF (Austrian Science Fund) and the project I 1136 of the FWF and the ANR  (French National Research Agency).}
\address[Beno\^it Loridant, J\"org Thuswaldner ]{Montanuniversit\"at Leoben,
    Franz Josefstrasse 18, Leoben 8700, Austria}
\author{Jun Luo}
\address[Jun Luo]{Department of Statistics,
    Sun Yat-Sen University, Guangzhou 512075, China}
\author{Tarek Sellami}
\address[Tarek Sellami]{Sfax University, Faculty of sciences of Sfax, Department of mathematics, Route Soukra, BP 802, 3018 Sfax, Tunisia.}
\author{J\"org M. Thuswaldner}
\subjclass[2010]{28A80, 52C20, 54D05}
\date{\today}
\begin{document}

\begin{abstract}
In this paper, we study cut sets of attractors of iteration function systems (IFS) in $\mathbb{R}^d$. Under natural conditions, we show that all irreducible cut sets of these attractors are perfect sets or single points. This leads to a criterion for the existence of cut points of IFS attractors. If the IFS attractors are self-affine tiles, our results become algorithmically checkable and can be used to exhibit cut points with the help of Hata graphs. This enables us to construct cut points of some self-affine tiles studied in the literature.
\end{abstract}

\maketitle
\begin{section}{Introduction}
To an iterated function system (IFS) $\{f_i\}_{i=1}^q$ of injective contractions on a complete metric space, there corresponds a unique nonempty compact set $T$ with the \emph{self-similarity} property 
\begin{equation}\label{IFSEQ}
T = \bigcup_{i=1}^q f_i(T).
\end{equation}
This self-similar set is called the \emph{attractor of the IFS} (see~\cite{Hutchinson81}). 

Special instances of IFS are so-called self-affine tiles. Let $A\in\mathbb{R}^{d\times d}$ be an expanding real matrix 
whose determinant $\det A\ne0$ is an integer and a subset ${\mathcal
D}=\left\{e_1,\ldots,e_{|\det A|}\right\}\subset\mathbb{R}^d$. If the attractor $T$
satisfying
\begin{equation}\label{SATEQ}
AT=T+\mathcal{D}:=\bigcup\limits_{e\in{\mathcal D}}T+e 
\end{equation}
has non-empty interior, then it
is called a \emph{self-affine tile}. The associated IFS reads $\{f_e\}_{e\in\mathcal{D}}$, where
$f_e(x)=A^{-1}(x+e)
$
is a contraction for some norm of $\mathbb{R}^d$ (see~\cite{LagariasWang96}). If all the coefficients of $A$ are
integers, ${\mathcal D}\subset{\mathbb Z}^d$ and $T+{\mathbb Z}^d$
is a tiling of ${\mathbb R}^d$, then $T$ is called an \emph{integral self-affine} ${\mathbb
Z}^d$-\emph{tile}.

 There is a vast literature on topological properties of IFS attractors and self-affine tiles. A rather common assumption on the attractor of an IFS $\{f_i\}_{i=1}^q$ is the \emph{open set condition} (OSC). We say that the IFS (or its attractor) satisfies the \emph{open set condition} whenever there exists a bounded open set $U$ with  $\bigcup_{i=1}^q f_i(U)\subset U $ and $f_i(U)\cap f_j(U) = \emptyset$ for all $i \neq j$ (see for example~\cite{Falconer97}). Under this condition, the contracted copies $f_i(T)$ of $T$ do not overlap. Hata showed in~\cite{H} that a connected attractor is even a locally connected continuum. In the case of a plane attractor ($d = 2$), Luo, Rao and Tan proved in~\cite{LRT} the disk-likeness of self-similar connected tiles with connected interior. The connectedness of self-affine tiles was studied in~\cite{KiratLau00}, and Bandt and Wang characterized in~\cite{BW} self-affine plane tiles that are homeomorphic to a disk by the number and location of their neighbors in the tiling. The structure of the interior components of self-similar tiles with disconnected interior is investigated by Ngai and Tang in~\cite{NT}. 
 
 These studies usually rely on the way intersections happen between the natural subdivisions of the attractor obtained by iterating the set equation~(\ref{IFSEQ}). The underlying tools are the so-called \emph{Hata graphs}. Suppose that $T$ is the attractor of an IFS $\{f_1,\ldots,f_q\}$. For $n\in\mathbb{N}$ and any sequence $\alpha=i_1i_2\cdots i_n\in\{1,2,\ldots,q\}^n$, let $f_\alpha: =f_{i_1}\circ f_{i_2}\circ \cdots \circ f_{i_n}$. The
{\em $n$-th Hata Graph} $G_n$ for
the IFS $\left\{f_1,f_2,\ldots,f_q\right\}$ is defined as the graph
with vertex set $\{1,2,\ldots,q\}^n$ so that two elements
$\alpha,\beta\in\{1,2,\ldots,q\}^n$ are incident whenever
$f_\alpha(T)\cap f_\beta(T)\ne\emptyset$.

Hata showed in~\cite{H}  that the attractor is connected if and only if
the first Hata graph $G_1$ is connected. Therefore, sometimes $G_n$ is called the $n$-th {\em connectivity graph} of the IFS. Hata graphs $G_n$ 
%and Hata  complexes $\mathcal{K}^n$ 
provide a combinatoric viewpoint to explore the topology of $T$ and its boundary, such as connectivity, homeomorphy to a simple arc or to a closed disk~\cite{H,KiratLau00,AT}.

In this paper, we are concerned with \emph{cut sets} of self-similar sets. These are subsets $X$ of a connected self-similar set $T$ such that $T\setminus X$ is no longer connected. The study of cut sets is of great importance for the understanding of fractal sets with a wild topology. They were used in~\cite{AkiyamaDorferThuswaldnerWinkler09} to give a combinatorial description of the fundamental group of the Sierpinski gasket, and more generally of one-dimensional spaces~\cite{DorferThuswaldnerWinkler13}. On the opposite, the lack of cut points has an impact on the boundary of the complement of locally connected plane continua~\cite{WhyburnDuda79}. This was exploited in~\cite{NT1} to show the homeomorphy to the closed disk  of interior components of some self-affine tiles. 

We obtain the following results.  In Section~\ref{sec:IrrPerf}, we investigate cut sets of general self-similar sets and  give conditions under which the irreducible cut sets of a connected self-similar set without cut points are perfect sets (Theorem~\ref{general} and its corollary). In Section~\ref{SATIrrPerf} we  formulate this result for the particular case of integral self-affine tiles and obtain that the above conditions become algorithmically checkable. Finally, in Section~\ref{sec:SATCuts}, we present a new application of Hata graphs and give a method to detect  cut points in a self-affine $\mathbb{Z}^d$-tile (Theorem~\ref{cutpoint}). Our results are illustrated throughout the paper by several examples. 
%We end up in Section~\ref{sec:Persp} with some questions and perspectives. 

\end{section}

\begin{section}{Irreducible cuts are perfect}\label{sec:IrrPerf}

In this section, the main result concerns cut sets of general self-similar sets. Let us start with fundamental definitions. 

We recall that a  \emph{cut set} $X$ of a connected set $Y$ is a subset of $Y$ such that $Y\setminus
X$ is disconnected. A  cut set $X$ of a connected set $Y$ is \emph{irreducible} if  $Y\setminus X_0$ is connected for any
proper subset $X_0$ of $X$ which is closed in $Y$. The {\em derived set} of a set $X\subset Y$ is the set of limit points, denoted by $X'$. 
Finally, for a finite set $A$, the set $A^*$ denotes the set of all nonempty finite words on $A$. For $k\geq 1$, $A^k$ is then the subset of words of length $k$.

\begin{theorem}\label{general}
Let $T$ be the attractor of an IFS $\{f_1,\ldots,f_q\}$ with
injective contractions. Suppose that $T$ is connected and let $E$ be the
exceptional set defined as
\begin{equation}\label{E}
\displaystyle E:=\bigcup_{\begin{array}{c}\alpha,\beta\in \{1,\ldots,
q\}^*,\\ \#f_\alpha(T)\cap f_\beta(T)=1\end{array}} f_\alpha(T) \cap f_\beta(T).
\end{equation}
Then either  $T$ has a cut point or each irreducible cut set $X$ of $T$ admits a partition
$X=X'\cup X_0$ with $X_0\subset E$.
\end{theorem}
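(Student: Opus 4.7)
The plan is to show that if $T$ has no cut point, then every isolated point $x$ of an irreducible cut set $X$ belongs to $E$; the decomposition $X = X' \cup X_0$ with $X_0 := X \setminus X' \subseteq E$ will then give the statement. So assume $T$ has no cut point, let $X$ be an irreducible (closed) cut set, and let $x$ be an isolated point of $X$. Pick an open $U \ni x$ in $T$ with $U \cap X = \{x\}$. By irreducibility, $Y := T \setminus (X \setminus \{x\})$ is connected, whereas $Y \setminus \{x\} = T \setminus X$ is disconnected; hence $x$ is a cut point of $Y$.

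I would then exploit the self-similar structure to localize the picture around $x$. Since $\diam f_\alpha(T) \to 0$ uniformly as $|\alpha| \to \infty$, choose $n$ large enough that every level-$n$ piece containing $x$ lies inside $U$. Set
\[
S := \{\alpha\in\{1,\ldots,q\}^n : x\in f_\alpha(T)\}, \qquad V := \bigcup_{\alpha\in S}f_\alpha(T) \subseteq U, \qquad W := \bigcup_{\alpha\notin S}f_\alpha(T).
\]
Then $V \cap X = \{x\}$, $x \notin W$, and $X \setminus \{x\} \subseteq W$. If some $\alpha \neq \beta$ in $S$ satisfy $f_\alpha(T) \cap f_\beta(T) = \{x\}$ we are done, since then $x\in E$; so suppose instead that every such pair shares a further point $y_{\alpha\beta} \neq x$.

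The remaining task is to derive a contradiction from this situation. Each $f_\alpha$ is a continuous injection on the compact $T$, hence a homeomorphism onto $f_\alpha(T)$; combined with $T$ having no cut point, this renders every $f_\alpha(T) \setminus \{x\}$ connected, and pairwise intersection through the $y_{\alpha\beta}$'s forces $V \setminus \{x\}$ to be connected. By Hata's theorem $T$ is locally connected, so each $f_\beta(T)$ is too, and hence so is $W$ as a finite union of closed locally connected sets. Since $T \setminus X$ is disconnected while $V \setminus \{x\}$ lies in a single component, there exists a component $D'$ of $T \setminus X$ disjoint from $V \setminus \{x\}$; then $D' \subseteq W \cap Y$. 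Because $W$ is closed with $x \notin W$, $x$ has positive distance to $W$, so $x \notin \overline{D'}^T$; moreover, local connectedness of $W \cap Y$ (open in $W$) makes $D'$ clopen in $W \cap Y$. Combining with the fact that both $V$ and $W \cap Y$ are closed in $Y$, this produces a decomposition
\[
Y = D' \sqcup \bigl( V \cup ((W \cap Y) \setminus D') \bigr)
\]
into nonempty disjoint closed subsets, contradicting the connectedness of $Y$. The main obstacle is precisely this last topological step: to convert the hypothetical second component of $T\setminus X$ into a separation of $Y$, one must track closures carefully across the three spaces $T$, $Y$ and $W$, and it is here that Hata's local connectedness enters essentially.
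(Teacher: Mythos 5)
Your proof is correct and its core is the same as the paper's: around an isolated point $x$ of $X$ lying outside $E$, you pass to a level-$n$ subdivision so fine that the pieces containing $x$ avoid the rest of $X$, use the no-cut-point hypothesis to see that each punctured piece $f_\alpha(T)\setminus\{x\}$ is connected, use $x\notin E$ to glue them into a connected set $V\setminus\{x\}$, and then play this against the closed union $W$ of the remaining pieces, which misses $x$. The two arguments diverge only in the endgame. The paper fixes a separation $T\setminus X=A\cup B$, notes that irreducibility forces $X=\overline{A}\cap\overline{B}$, observes that $V\setminus\{x\}$ lies in (say) $A$ so that $B\subset W$, and concludes $x\notin \overline{A}\cap B_1\supset\overline{A}\cap\overline{B}=X$, a contradiction. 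You instead apply irreducibility to the proper closed subset $X\setminus\{x\}$ to get that $Y=T\setminus(X\setminus\{x\})$ is connected, and then exhibit a separation $Y=D'\sqcup\bigl(V\cup((W\cap Y)\setminus D')\bigr)$. This works, but your appeal to Hata's local connectedness theorem (to make $D'$ clopen in $W\cap Y$) is an avoidable detour: taking $D'$ to be the whole piece $B$ of a separation with $V\setminus\{x\}\subset A$, one has $\overline{D'}\cap Y\subset (B\cup X)\cap Y=B\cup\{x\}$ and $x\notin\overline{D'}$ because $D'\subset W$ and $W$ is closed with $x\notin W$, so $D'$ is already clopen in $Y$ with no local connectedness needed. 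Finally, both your argument and the paper's tacitly use that an irreducible cut set is closed in $T$ (you need it so that $X\setminus\{x\}$ is closed in $T$ and irreducibility applies; the paper needs it for $X=\overline{A}\cap\overline{B}$); this is standard but deserves a line in either write-up.
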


\begin{corollary}
Let $\{f_1,\ldots,f_q\}$ be a set of injective contractions. Assume
that the associated attractor $T$ is connected and satisfies
$\#f_\alpha(T)\cap f_\beta{(T)}\ne1$ for each pair of elements
$\alpha,\beta\in\{1,2,\ldots,q\}^\star$. Then either $T$ has a cut point or each irreducible cut set $X$ of $T$ is a perfect set.
\end{corollary}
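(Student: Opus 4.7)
The corollary is a direct specialization of Theorem~\ref{general}, so the plan is simply to check that the hypothesis collapses the exceptional set $E$ to the empty set. First I observe that the set $E$ defined in~(\ref{E}) is a union indexed over those pairs $\alpha,\beta\in\{1,\ldots,q\}^*$ for which $\#f_\alpha(T)\cap f_\beta(T)=1$. By the standing assumption of the corollary, no such pair exists, so the index set of this union is empty and hence $E=\emptyset$.

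Applying Theorem~\ref{general}, I then conclude that either $T$ has a cut point --- which is the first alternative of the corollary --- or every irreducible cut set $X$ of $T$ admits a partition $X = X'\cup X_0$ with $X_0 \subset E = \emptyset$. In the second case $X_0$ must be empty, whence $X = X'$. To finish, I would translate this equality into the standard notion of a perfect set: the inclusion $X' \subset X$ says that $X$ contains all its limit points, so $X$ is closed, while the inclusion $X \subset X'$ says that every point of $X$ is a limit point of $X$, so $X$ has no isolated points. Together these two statements are precisely the definition of a perfect subset of $T$.

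There is essentially no obstacle; the only thing worth double-checking is that the convention for the derived set $X'$ (the set of limit points, as declared just before Theorem~\ref{general}) is indeed the one that makes ``$X = X'$'' equivalent to ``$X$ perfect''. All of the real topological content is carried by Theorem~\ref{general}, and the corollary merely records what its conclusion degenerates to once the singleton intersections are ruled out by hypothesis.
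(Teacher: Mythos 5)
Your proposal is correct and matches the paper's (implicit) reasoning: the paper states this corollary without proof as an immediate consequence of Theorem~\ref{general}, and your argument---that the hypothesis empties the index set of the union defining $E$, forcing $X_0=\emptyset$ and hence $X=X'$, which is exactly the definition of a perfect set---is the intended one. No gaps.
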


\begin{proof}[Proof for Theorem~\ref{general}] Assume that $T$ has no cut point
and let $X$ be an irreducible cut set of $T$. Then for any separation
$T\setminus X=A\cup B$ we have $\overline{A}\cap\overline{B}=X$. Let  $X_0:=X\setminus E$. 
We have to show that $X_0$ contains no isolated
point of $X$.

Assume on the contrary that $x_0\in X_0$ is isolated in $X$. Let
$\varepsilon>0$ be the distance between $x_0$ and
$X\setminus\{x_0\}$. Choose $k\in{\mathbb N}$ large enough that
$\textrm{diam} f_\alpha(T)<\varepsilon$ for each
$\alpha\in\{1,2,\ldots,q\}^k$. Let ${\mathcal W}$ be the collection
of all the indices $\alpha\in\{1,2,\ldots,q\}^k$ with $x_0\in
f_\alpha(T)$ and define
\[A_1:=\bigcup\limits_{\alpha\in{\mathcal W}}\left[f_\alpha(T)\setminus\{x_0\}\right].\]
Observe that $f_\alpha(T)\setminus\{x_0\}$ is connected for each
$\alpha\in{\mathcal W}$. Indeed, if it were disconnected then $x_0$
would be a cut point of $f_\alpha(T)$ and therefore, since
$f_\alpha$ is injective, $f_\alpha^{-1}(x_0)$ is a cut point of $T$,
which contradicts our assumption.

Note that $f_\alpha(T)\cap f_\beta(T)\ne\emptyset$ for
$\alpha,\beta\in{\mathcal W}$. Since $x_0\notin E$ this implies that
$\#\left[f_\alpha(T)\cap f_\beta(T)\right]\ge2$. Hence, $A_1$ is the
union of finitely many connected sets having pairwise nonempty
intersection. Thus $A_1$ is connected and therefore contained either
in $A$ or in $B$. Without loss of generality, assume $A_1\subset A$.
Set a closed set $B_1$ by
\[B_1:=\bigcup\limits_{\alpha\in\{1,2,\ldots,q\}^k\setminus{\mathcal W}} f_\alpha(T).\]
Then $T\setminus\{x_0\}=A_1\cup B_1$. Since $A_1\subset A$ and $B\cap A=\emptyset$,  we have necessarily $B\subset B_1$, and $B_1$ does not contain $x_0$. This means that
$\overline{A}\cap\overline{B}$ is contained in $\overline{A}\cap
B_1$ and therefore $\overline{A}\cap\overline{B}=X$ does not contain
$x_0$. This contradicts the assumption that $x_0\in X$.
\end{proof}

\begin{example}
Let $n \in \mathbb{N}$ be even and define the squares
\[
Q_n(i,j) := \left[\frac{i}{n},\frac{i+1}{n}\right] \times
\left[\frac{j}{n},\frac{j+1}{n}\right] \qquad (i,j \in
\{0,1,\ldots, n-1\}).
\]
\begin{figure}[h]
\includegraphics[width=4cm]{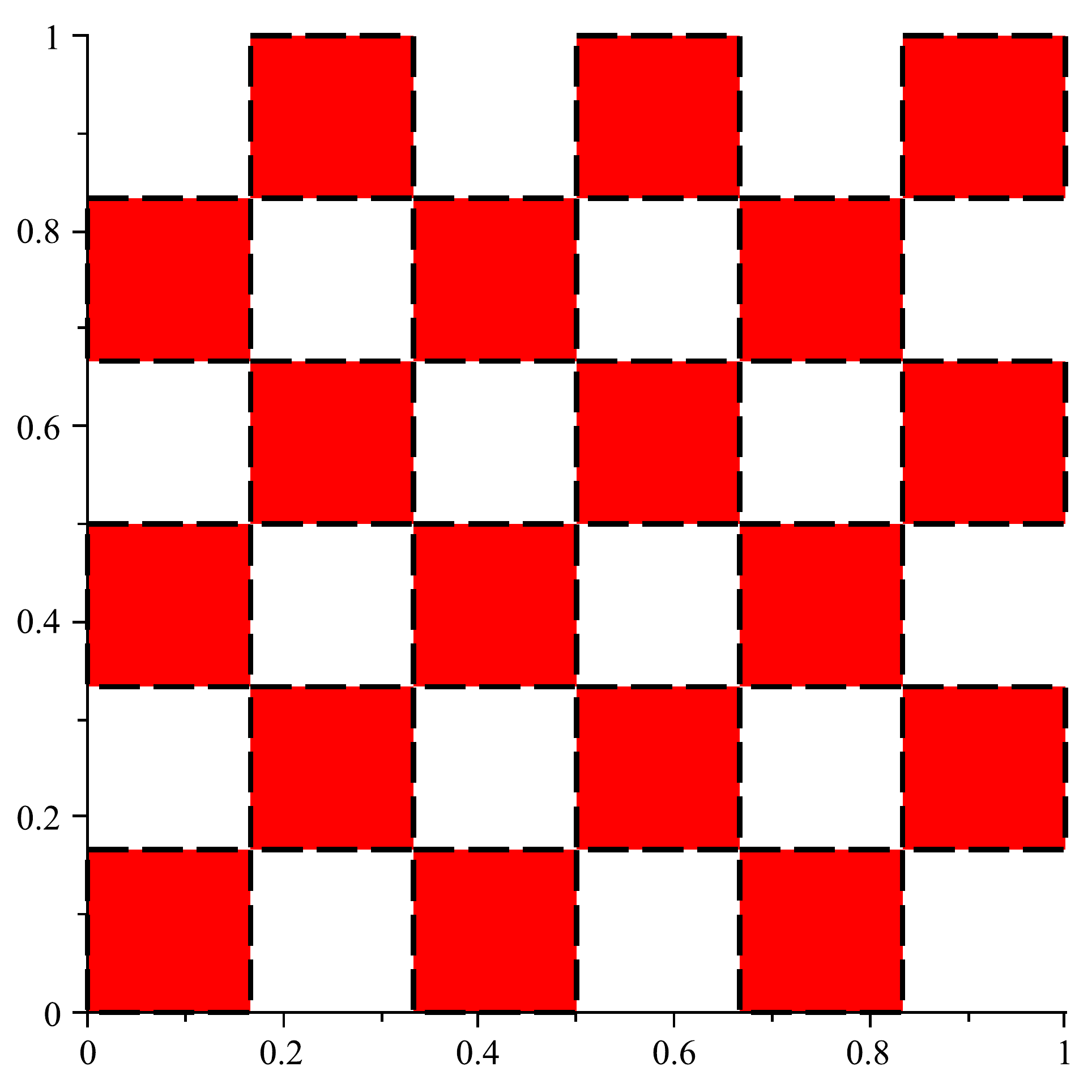}\hskip1cm\includegraphics[width=4cm]{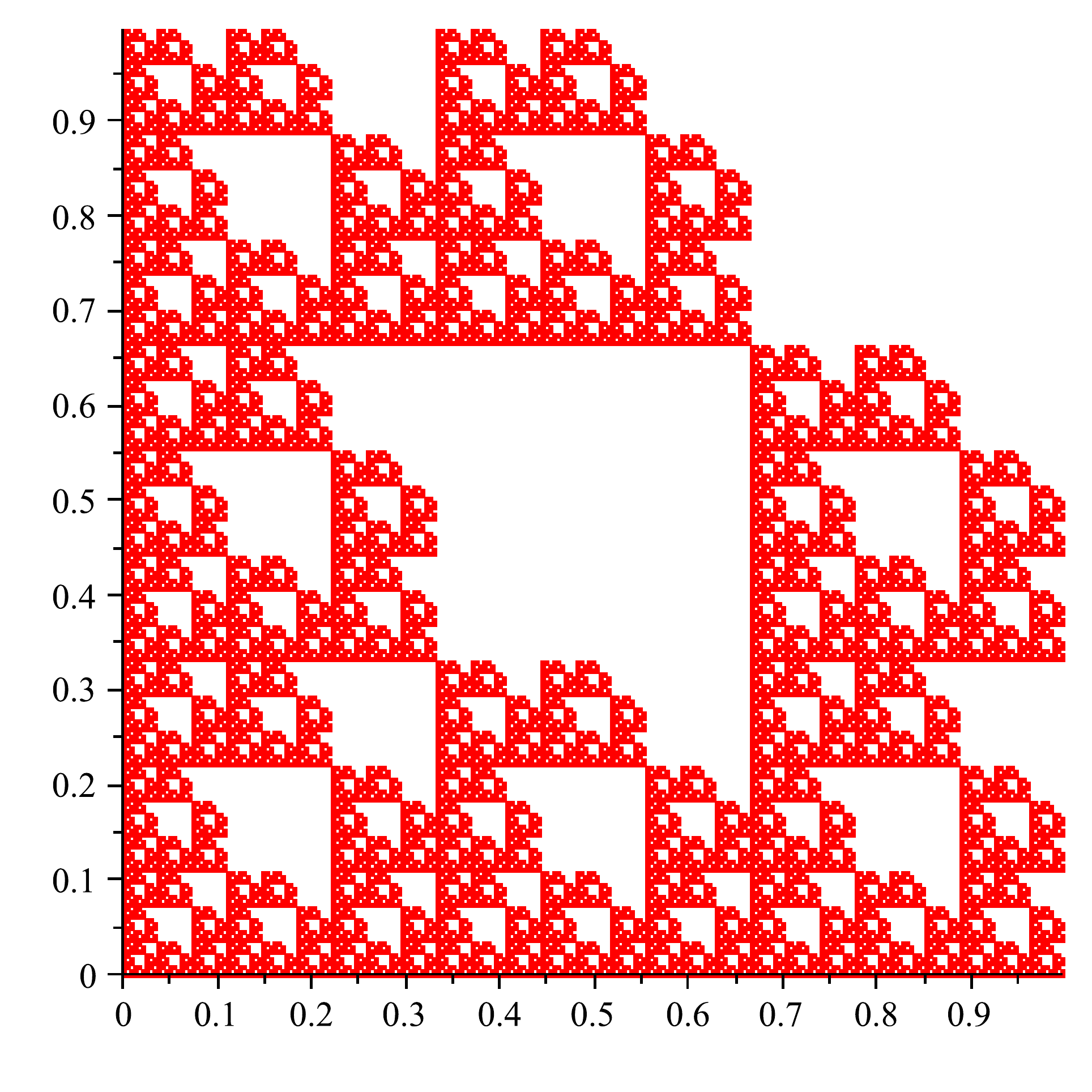}\\

On the left side: an example of an IFS attractor with irreducible cut sets
with one, two, three, four and five elements. On the right side: an example of an IFS attractor with an infinite irreducible cut set that is not perfect.
 \label{SquaresFiniteCut} \label{TrianglesNonPerfect}

\end{figure}

It is easily seen that the set
\[
T_n := \bigcup_{\begin{subarray}{c} i,j \in \{0,1,\ldots, n-1\} \\
i+j \equiv 0 \pmod{2}
\end{subarray}} Q_n(i,j)
\]
is the attractor of an IFS with $n^2$ similarities (the set $T_6$
is depicted on the left side of Figure~\ref{SquaresFiniteCut}). Moreover the set
\[
X :=
\left\{\left(\frac1n,\frac2n\right),\left(\frac2n,\frac3n\right),
\ldots,\left(\frac{n-2}n,\frac{n-1}n\right)\right\}
\]
forms an irreducible cut with cardinality $\# X = n-2$ and
\[
Y :=
\left\{\left(\frac{n-1}n,\frac1n\right),\left(\frac{n-2}n,\frac2n\right),
\ldots,\left(\frac{1}n,\frac{n-1}n\right)\right\}
\]
forms an irreducible cut with $\#Y = n-1$.

Since $n$ can be chosen to be any even positive integer we
conclude that for each given $m\in \mathbb{N}$ there exists an IFS
attractor having an irreducible cut $X$ with $\#X = m$.

We mention that $T_n$ even tiles the plane with respect to the
ornament group $p2$.
\end{example}

\begin{example}
Define the IFS
$\{f_{0,0},f_{1,0},f_{2,0},f_{0,1},f_{2,1},f_{0,2},f_{1,2}\}$ with
\[
f_{i,j}(x,y) := \left(\frac{x+i}3,\frac{y+j}3\right).
\]
The attractor $T$ of this IFS is depicted on the right side of
Figure~\ref{TrianglesNonPerfect}.

%\begin{figure}[h]
%\includegraphics[width=5cm]{TrianglesNonPerfect.eps}
%\caption{An example of an IFS attractor with an infinite irreducible
%cut set that is not perfect.}
%\end{figure}

It is easy to see that the set
\[
X := \left\{\left( \frac{1}{3^n},\frac{1}{3^n}\right) : n \ge 1
\right\} \cup \left\{\left( \frac{2}{3^n},\frac{2}{3^n}\right) : n
\ge 1 \right\}
\]
is an irreducible cut of $T$. This set is infinite but not
perfect. On the other hand one easily verifies that $T$ has no
finite cuts.
\end{example}

We mention that the Sierpi\'nski triangle also has non-perfect irreducible cut sets.

\end{section}

\begin{section}{Cut sets of ${\mathbb Z}^d$-tiles}\label{SATIrrPerf}

We are interested in self-affine tiles, defined as attractors with nonempty interior of IFS of the form 

$$ \{f_e(x) = A^{-1}(x+e), x\in\mathbb{R}^d \}_{e\in\mathcal{D}},$$
where $A$ is a $d\times d$ matrix with eigenvalues greater than $1$ in modulus and $\mathcal{D}\subset \mathbb{R}^d$ with $ | \mathcal{D} | =  | \textrm{det} A| $ is supposed to be an integer.

In particular, we will be concerned with \emph{integral self-affine tiles with standard digit set}: this means that $A$ is an integer matrix and $\mathcal{D}\subset \mathbb{Z}^d$ is a complete set of coset representatives of $\mathbb{Z}^d/ A\mathbb{Z}^d$. Moreover the set $T+\mathbb{Z}^d$ will be assumed to be a tiling of $\mathbb{R}^d$, \emph{i.e.}, $ \mathbb{R}^d = T+\mathbb{Z}^d$
with  $(\textrm{int}(T+d_1))\cap (T+d_2)  = \emptyset$ for $d_1\neq d_2$  $(d_1,d_2\in\mathbb{Z}^d)$. We say also that $T = T(A,\mathcal{D})$ is a $   \mathbb{Z}^d$-tile in $\mathbb{R}^d$.

In the more restrictive setting of integral self-affine $\mathbb{Z}^d$-tiles,
Theorem~\ref{general} and its corollary take the following form.

\begin{theorem}\label{tile}
Let $T$ be a connected ${\mathbb Z}^d$-tile. Then one
of the following alternatives holds:
\begin{itemize}
\item $T$ has a cut point.
\item Each irreducible cut set $X$ of $T$ admits a partition
$X=X'\cup X_1$ with $X_1$ contained in
\[
\widetilde{E}:=\bigcup_{k\in\mathbb{N}}\bigcup_{ \begin{array}{c}v,s\in{\mathbb Z}^d,\\
\#T\cap (T+s)=1\end{array}}A^{-k}(T\cap (T+s))+A^{-k}v.
\]
\end{itemize}
\end{theorem}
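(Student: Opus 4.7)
The plan is to derive this statement directly from Theorem~\ref{general}. Since $\widetilde E$ is built to mirror the exceptional set $E$ from equation~\eqref{E}, the whole proof reduces to showing that $E\subset\widetilde E$. Indeed, if $T$ has no cut point, Theorem~\ref{general} applied to the IFS $\{f_e\}_{e\in\mathcal D}$ produces a partition $X=X'\cup X_0$ with $X_0\subset E$; then choosing $X_1:=X_0$ and invoking $E\subset\widetilde E$ yields the claim.

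To prove $E\subset\widetilde E$, I first express $f_\alpha$ explicitly. For $\alpha=e_1\cdots e_k\in\mathcal D^k$, induction gives
\[
f_\alpha(x)=A^{-k}x+\sum_{i=1}^{k}A^{-i}e_i=A^{-k}(x+u_\alpha),
\qquad u_\alpha:=\sum_{i=1}^{k}A^{k-i}e_i.
\]
Since $A$ is an integer matrix and $\mathcal D\subset\mathbb Z^d$, we have $u_\alpha\in\mathbb Z^d$; hence $f_\alpha(T)=A^{-k}(T+u_\alpha)$. For two words $\alpha,\beta\in\mathcal D^k$ of equal length,
\[
f_\alpha(T)\cap f_\beta(T)=A^{-k}\bigl((T+u_\alpha)\cap(T+u_\beta)\bigr)=A^{-k}\bigl(T\cap(T+s)\bigr)+A^{-k}v,
\]
with $s:=u_\beta-u_\alpha\in\mathbb Z^d$ and $v:=u_\alpha\in\mathbb Z^d$. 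Because $A^{-k}$ is a bijection, $\#f_\alpha(T)\cap f_\beta(T)=1$ is equivalent to $\#T\cap(T+s)=1$, so every equal-length contribution to $E$ lies in $\widetilde E$.

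For words of unequal length, say $|\alpha|<|\beta|=:k$, I use the set equation~\eqref{IFSEQ} to refine the shorter word: $f_\alpha(T)=\bigcup_{\gamma\in\mathcal D^{k-|\alpha|}}f_{\alpha\gamma}(T)$, whence
\[
f_\alpha(T)\cap f_\beta(T)=\bigcup_{\gamma\in\mathcal D^{k-|\alpha|}}\bigl(f_{\alpha\gamma}(T)\cap f_\beta(T)\bigr).
\]
If this union has cardinality one, each non-empty term has cardinality at most one, hence exactly one, and the equal-length formula just established places the unique point into $\widetilde E$. This establishes $E\subset\widetilde E$ and, together with Theorem~\ref{general}, proves the theorem.

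The only genuinely non-trivial step in this plan is the book-keeping: verifying that the vector $u_\alpha$ indeed lies in $\mathbb Z^d$ (which uses that both $A$ and $\mathcal D$ are integral) and reducing the unequal-length case to the equal-length formula via the self-similarity relation. Once these translations are in place, the theorem is a direct specialization of Theorem~\ref{general}.
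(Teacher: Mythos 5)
Your proposal is correct and follows exactly the paper's route: the paper's own proof consists of the single observation that $E\subset\widetilde{E}$ and an appeal to Theorem~\ref{general}. You have simply supplied the book-keeping (the formula $f_\alpha(T)=A^{-k}(T+u_\alpha)$ with $u_\alpha\in\mathbb{Z}^d$, the equal-length computation, and the reduction of unequal-length words via the set equation) that the paper leaves implicit, and all of these details check out.
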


The following corollary is an immediate consequence of this result.

\begin{corollary}
\label{cor1}
If $T$ is a connected ${\mathbb Z}^d$-tile and if $\#(T\cap
(T+s))\ne1$ for all $s\in\mathbb{Z}^d$,
 then either  $T$ has a cut point or each irreducible cut set of $T$ is a perfect set.
\end{corollary}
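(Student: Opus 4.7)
The plan is to derive Corollary~\ref{cor1} directly from Theorem~\ref{tile}. The key observation is that under the hypothesis $\#(T\cap(T+s))\ne 1$ for every $s\in\mathbb{Z}^d$, the indexing condition $\#T\cap(T+s)=1$ in the inner union defining $\widetilde{E}$ is never satisfied. Hence for every $k\in\mathbb{N}$ the inner union is empty, and therefore the whole exceptional set $\widetilde{E}$ collapses to $\emptyset$.

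Plugging this into the second alternative of Theorem~\ref{tile}, any irreducible cut set $X$ of $T$ would admit a partition $X=X'\cup X_1$ with $X_1\subset\widetilde{E}=\emptyset$, which forces $X_1=\emptyset$ and $X=X'$. This equality is precisely the standard characterization of a perfect set: the inclusion $X\subset X'$ says that $X$ has no isolated points, while $X'\subset X$ says that $X$ is closed. Combining with the first alternative of Theorem~\ref{tile} yields the dichotomy claimed in the corollary.

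There is no genuine obstacle; the corollary is essentially a tautological specialization of the theorem. The only minor point to note is that the hypothesis is automatically vacuous at $s=0$, because $T\cap T=T$ is infinite when $T$ is a tile with non-empty interior, so the substantive content of the assumption concerns $s\ne 0$.
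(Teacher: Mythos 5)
Your proof is correct and is exactly the deduction the paper intends: the paper states Corollary~\ref{cor1} as an immediate consequence of Theorem~\ref{tile} without further argument, and your observation that the hypothesis forces $\widetilde{E}=\emptyset$, hence $X_1=\emptyset$ and $X=X'$ (i.e.\ $X$ is perfect), fills in that one-line gap faithfully. The side remark about $s=0$ being automatic is also accurate.
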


\begin{proof}[Proof for Theorem~\ref{tile}]: To prove this result just note that
$E\subset \widetilde{E}$ where $E$ is defined as in Theorem \ref{general}. The theorem
now is a consequence of Theorem~\ref{general}.
\end{proof}

\begin{remark}
Note that the conditions of these assertions are much easier to
check than the conditions of the more general result in
Theorem~\ref{general}. Moreover, since $T$ is compact, the quantities $\#(T\cap (T+s))$ can be non-zero for only finitely many $s\in\mathbb{Z}^d$ and can be read off from the
so-called {\em neighbor graph} associated to the ${\mathbb
Z}^d$-tile $T$, which can be computed algorithmically  from the data $(A,\mathcal{D})$ (see~\cite{ScheicherThuswaldner03}).
\end{remark}

\begin{example}\label{ExA4B5}
Let $T = T(A,\mathcal{D}) $ be the $\mathbb{Z}^2$-tile with

\[A=
\left(\begin{array}{cc}0&-5\\1&-4\end{array}\right),{\mathcal D}=\left\{\left(\begin{array}{c}i\\0\end{array}
\right); 0\le i\le 4\right\}
\]
This tile is the fundamental domain of the canonical number system associated with the complex basis $-2+i$ (see~\cite{KK}).  It  is depicted in Figure~\ref{A4B5Tiling} together with its  $10$ neighbors.

\begin{figure}[h]
\includegraphics[width=5cm]{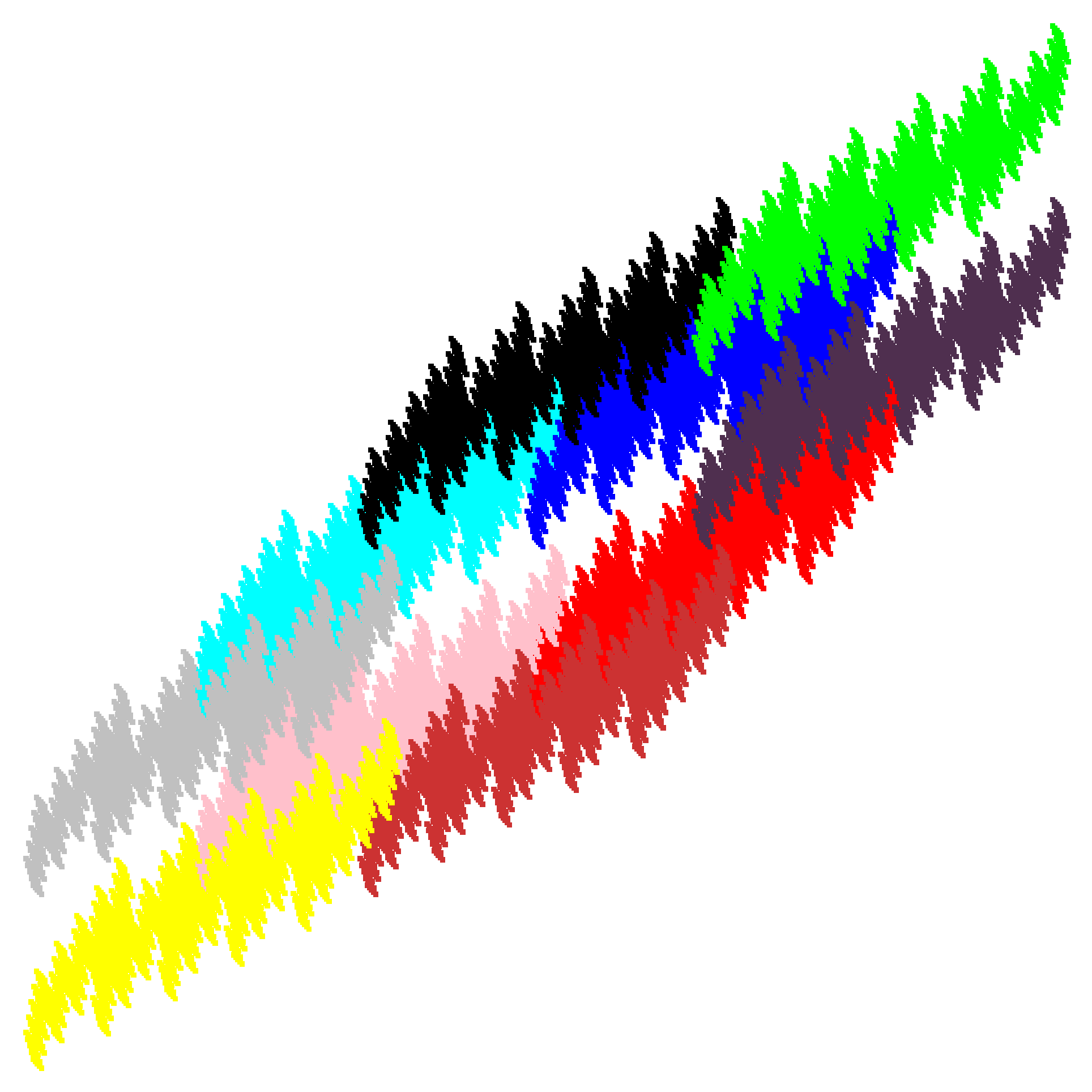}
\caption{Tiling A=4 B=5. \label{A4B5Tiling}}
\end{figure}

$T$ is a  connected self-affine tile~\cite{AT,AT1}.
In~\cite{NT1}, S-M. Ngai and T-M. Tang  used a technique developed in their paper to  
prove that $T$ has no cut point.  On the other hand, in~\cite{AT1}, S. Akiyama and J. M. Thuswaldner show  that $\#(T\cap (T+s))\ne 1$   for each $s\in\mathbb{Z}^d$.  Applying  Corollary \ref{cor1}, we deduce that each irreducible cut set of $T$ is a perfect set.

\end{example}
\end{section}

\begin{section}{A criterion for the existence of cut points of $\mathbb{Z}^d$-tiles}\label{sec:SATCuts}

In this section we will establish a theorem stating that under
certain circumstances the existence of a cut point of a
$\mathbb{Z}^d$-tile can be seen form the structure of the Hata
graphs $G_n$. The
formulation of the theorem will require some technical
preliminaries, however, as will be illustrated by two examples, its
application is often very simple.

For ${\mathbb Z}^d$-tiles $T$, the vertex set $V_n$ of the Hata
graph $G_n=(V_n,E_n)$ of $n$-th order associated to $T$ can be
defined in terms of elements of $\mathbb{Z}^d$ rather than strings
of digits. Indeed, for an integral self-affine $\mathbb{Z}^d$-tile $T(A,\mathcal{D})$ with standard digit set $\mathcal{D}$,  we get the following
equivalent representation of $G_n$.

\begin{itemize}
\item $V_n$: $v\in{\mathbb Z}^d$ is a vertex of $G_n$ if and only if
$v=\sum\limits_{k=0}^{n-1}A^kd_k$ with $d_k\in{\mathcal D}$ ($0\le
k\le n-1$).
\item $E_n$: there is an edge between $v$ and $v'$ if and only if
$(T+v)\cap( T+v')\ne\emptyset$.
\end{itemize}

\begin{remark}
$V_n$ is exactly the set of all $v\in{\mathbb Z}^d$ satisfying
$A^{-n}(T+v)\subset T$. Indeed $T$ can be written as
\[
T=\bigcup\limits_{v\in V_n}A^{-n}(T+v).
\]
\end{remark}

\begin{definition}[Children]
For $n\ge 1$ let $G_n=(V_n,E_n)$ be the Hata graphs associated to a
$\mathbb{Z}^d$-tile $T$. A node $v'\in V_{n+1}$ is called a
\emph{child} of a node $v\in V_n$ if
\[
A^{-n-1}(T+v') \subset A^{-n}(T+v).
\]
If $U \subseteq V_n$ is given, then
\[
C(U) :=AU+\mathcal{D}= \{v' \in V_{n+1} \;;\; \exists v\in U: A^{-n-1}(T+v')
\subset A^{-n}(T+v)\} 
\]
is the \emph{set of children} of the elements of $U$. Note that
$C(U) \subseteq V_{n+1}$.
\end{definition}

In what follows, we will consider cut sets of the Hata graphs $G_n$.
To this matter we will study partitions $V_n=W_1^{(n)} \cup
W_2^{(n)} \cup W_3^{(n)}$ of the Hata graph with the property that
$W_2^{(n)}$ separates $G_n$ between $W_1^{(n)}$ and $W_3^{(n)}$, {\it
i.e.}, all paths in $G_n$ between a node $v_1 \in W_1^{(n)}$ and a node  $v_3
\in W_3^{(n)}$ pass through nodes contained in $W_2$. Set
\[
X_n(W) := \bigcup_{v \in W} A^{-n}(T+v) \qquad (W \subseteq V_n).
\]
Then, by the definition of the Hata graph the separation property of
$W_2^{(n)}$ in $G_n$ means that $X_n(W_2^{(n)})$ separates between
$X_n(W_1^{(n)})$ and $X_n(W_3^{(n)})$, {\it i.e.}, since $T =
X_n(W_1^{(n)})\cup X_n(W_2^{(n)})\cup X_n(W_3^{(n)})$ this means that
$X_n(W_2^{(n)})$ forms a separation of $T$. We will define
$W_i^{(n)}$ in a way that there exists a single point $z\in T$ such
that
\begin{equation}\label{inclusion}
X_{n}(W_2^{(n)}) \supset X_{n+1}(W_2^{(n+1)})\quad \hbox{and}\quad
\bigcap_{n\ge n_0} X_{n}(W_2^{(n)}) = \{z\}.
\end{equation}
The point $z$ will turn out to be a cut point. Note that the
inclusion relation on the left hand side of \eqref{inclusion} is
equivalent to $W_2^{(n+1)} \subset C(W_2^{(n)})$.

This procedure will \emph{a priori} require to check infinitely many
partitions $W_1^{(n)}$, $W_2^{(n)}$, $W_3^{(n)}$. However, we will see in the examples that the self-affinity of the tiles allows to define these partitions in a recursive way. We are now in a position to state the following theorem.

\begin{theorem}[Cut points of $\mathbb{Z}^d$-tiles]\label{cutpoint}
Let $T$ be a connected $\mathbb{Z}^d$-tile and $G_n=(V_n,E_n)$
($n\in\mathbb{N}$) its associated Hata graphs. Let $n_0$ be a 
positive integer. Suppose that for each $n \geq n_0$ there are partitions $V_n=W_1^{(n)} \cup
W_2^{(n)} \cup W_3^{(n)}$ with $W_j^{(n)}\not=\emptyset$ $(j\in\{1,2,3\})$
satisfying the following properties.
\begin{itemize}
\item[$(i)$] The set $W_2^{(n)}$ induces a connected subgraph of $G_n$ that separates $G_n$ between $W_1^{(n)}$ and
$W_3^{(n)}$.
\item[$(ii)$] The inclusions
$$\begin{array}{l}
\bullet \;W_1^{(n+1)} \subset C(W_1^{(n)}) \cup C(W_2^{(n)}), \\\\
\bullet \;W_2^{(n+1)}\subset C(W_2 ^{(n)})\\\\
\bullet\; W_3^{(n+1)} \subset C(W_3^{(n)}) \cup C(W_2^{(n)})
\end{array}$$
hold. 
\item[$(iii)$] The number of elements of $W_2^{(n)}$
 is uniformly bounded, \emph{i.e.}, there is an integer $K$ such that for all $n\in\mathbb{N}$, we have
 $\#W_2^{(n)}\leq K.
 $ 
\end{itemize}
Then $\bigcap_{n\geq n_0}X_n(W_2^{(n)})$ consists of a single point and this point is a cut point of $T$. 
\end{theorem}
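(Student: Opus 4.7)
The plan is to take $z$ to be the unique limit of the nested sequence $Z_n:=X_n(W_2^{(n)})$, and then to use the compatibility between successive partitions provided by $(ii)$ to upgrade the graph separation into an explicit separation of $T\setminus\{z\}$. First I would verify that $(Z_n)_{n\ge n_0}$ is a decreasing sequence of nonempty compact connected sets with $\diam(Z_n)\to 0$. Nestedness follows directly from $W_2^{(n+1)}\subset C(W_2^{(n)})$ together with the set equation $T=\bigcup_{d\in\mathcal D}A^{-1}(T+d)$. Connectedness of $Z_n$ follows from $(i)$: the subgraph induced on $W_2^{(n)}$ is connected, and edges of $G_n$ correspond precisely to nonempty intersections of the pieces $A^{-n}(T+v)$, so $Z_n$ is a chain of pairwise-meeting connected pieces. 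Finally, choosing a norm on $\mathbb R^d$ in which $\|A^{-1}\|<1$ (possible since $A$ is expanding) yields $\diam A^{-n}(T+v)\le\|A^{-1}\|^n\diam T$, and combined with $(iii)$ and connectedness this gives $\diam(Z_n)\le K\|A^{-1}\|^n\diam T\to 0$. Cantor's intersection theorem then provides $\bigcap_n Z_n=\{z\}$.

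To see that $z$ separates $T$, note that for every $y\ne z$ there is a minimal $n_1(y)\ge n_0$ with $y\notin Z_{n_1(y)}$, and at that level $y$ lies in exactly one of $X_{n_1(y)}(W_1^{(n_1(y))})$ or $X_{n_1(y)}(W_3^{(n_1(y))})$, these two being disjoint because $(i)$ forbids edges of $G_n$ between $W_1^{(n)}$ and $W_3^{(n)}$ while edges correspond to piece intersections. This defines $\sigma(y)\in\{1,3\}$. The crucial claim is label stability: $\sigma(y)=i$ forces $y\in X_n(W_i^{(n)})$ for every $n\ge n_1(y)$. To prove it in one step, assume $y\in X_n(W_1^{(n)})\setminus Z_n$ but $y\in X_{n+1}(W_3^{(n+1)})$ and pick $v'\in W_3^{(n+1)}$ with $y\in A^{-n-1}(T+v')$. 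By $(ii)$, write $v'=Aw+d$ with $w\in W_3^{(n)}\cup W_2^{(n)}$ and $d\in\mathcal D$, so $A^{-n-1}(T+v')\subset A^{-n}(T+w)$ and hence $y\in X_n(W_3^{(n)})\cup X_n(W_2^{(n)})$: the first alternative contradicts the disjointness of $X_n(W_1^{(n)})$ and $X_n(W_3^{(n)})$, the second contradicts $y\notin Z_n$. Iterating from $n_1(y)$ yields the stability.

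Set $P:=\sigma^{-1}(1)$ and $Q:=\sigma^{-1}(3)$; then $P$ and $Q$ are disjoint with $P\cup Q=T\setminus\{z\}$, and both are nonempty because $W_1^{(n)}\ne\emptyset\ne W_3^{(n)}$ and the tiling property forces the interior of any $A^{-n}(T+v)$ to be disjoint from every other such piece, hence from $Z_n$. If $y_k\in P$ and $y_k\to y\ne z$, then for $k$ large $y_k\notin Z_{n_1(y)}$ (since this set is closed and avoids $y$), so by stability $y_k\in X_{n_1(y)}(W_1^{(n_1(y))})$; the latter set is closed, so $y\in X_{n_1(y)}(W_1^{(n_1(y))})\setminus Z_{n_1(y)}$ and $\sigma(y)=1$, i.e.\ $y\in P$. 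Hence $\overline P\cap Q=\emptyset$ and symmetrically $P\cap\overline Q=\emptyset$, so $z$ is a cut point of $T$. The label-stability step is the crux: everything else reduces to a compactness or tiling routine, whereas there condition $(ii)$ must be combined essentially with the edge-intersection correspondence of Hata graphs and with the disjointness of the $W_1$- and $W_3$-unions.
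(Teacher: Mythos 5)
Your proof is correct and follows essentially the same route as the paper: first the nested-compact-sets argument with the diameter bound $K\|A^{-1}\|^n\diam T$ from $(i)$ and $(iii)$ to get the single point $z$, then a separation of $T\setminus\{z\}$ into the two "sides" determined by $W_1$ and $W_3$. Your label-stability claim is just a pointwise reformulation of the paper's inclusions $X_n(W_1^{(n)})\subset X_{n+1}(W_1^{(n+1)})$ and $X_n(W_1^{(n)})\subset X_{N_1}(W_1^{(N_1)})\cup X_{N_1}(W_2^{(N_1)})$, and your sets $P$ and $Q$ coincide with the paper's $X$ and $Y$.
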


\begin{remark} The connectedness of a self-affine $\mathbb{Z}^d$-tile can be checked algorithmically~\cite{H}.
\end{remark}

\begin{proof} 
 
We first show that $\bigcap_{n\geq n_0}X_n(W_2^{(n)})$ is a single point. By the second assumption in Item $(ii)$,  we have for all $n\geq n_0$ that
$$X_n(W_2^{(n)}) \supset X_{n+1}(W_2^{(n+1)})\ne\emptyset.
$$
Since $T$ is connected and $W_2^{(n)}$ induces a connected subgraph in the Hata graph, we conclude that $X_n(W_2^{(n)})$ is a compact connected subset of $T$ for all $n\geq n_0$. Now, by Item~$(iii)$, for all $n\geq n_0$,
$$\textrm{diam}(X_n(W_2^{(n)}))\leq \frac{K\;\textrm{diam}(T)}{\lambda^n},
$$
where $\lambda$ is the smallest eigenvalue of $A$ in absolute value. Indeed, $X_n(W_2^{(n)})$ is a union of subtiles of the form $A^{-n}(T+v)$ for some $v\in V_n$, whose diameters are smaller than $\textrm{diam}(T)/\lambda^n$. Moreover, by the connectedness of the subgraph induced by $W_2^{(n)}$ in the Hata graph, any two points $x,y$ of $X_n(W_2^{(n)})$ can be connected by a connected chain of at most $K$ such subtiles, \emph{i.e.}, there are $v_1,\ldots,v_k\in V_n$ with $k\leq K$ such that 
$x\in A^{-n}(T+v_1) ,\;y\in A^{-n}(T+v_k)$ and 
$$A^{-n}(T+v_j) \cap A^{-n}(T+v_{j+1}) \ne\emptyset \textrm{ for }j\in\{1,\ldots, k-1\}.
$$ 
This implies the above inequality. Therefore, $(X_n(W_2^{(n)}))$ is a closed nested sequence of compact sets whose diameters converge to $0$ as $n\to\infty$, and the intersection $\bigcap_{n\geq n_0} X_n(W_2^{(n)})$ consists of a single point. 

We call this single point $z$. We shall prove now that $z$ is a cut point of $T$. Let 
$$X:=\bigcup_{n\geq n_0} X_n(W_1^{(n)})\;\setminus\;\{z\},\; Y:= \bigcup_{n\geq n_0} X_n(W_3^{(n)})\;\setminus\;\{z\}.
$$ Obviously, $X\cap Y=\emptyset$. Moreover, since $V_n=W_1^{(n)} \cup
W_2^{(n)} \cup W_3^{(n)}$  is a partition of the Hata graph, we have the  following inclusions by Item $(ii)$ for all $n\geq n_0$:
$$\begin{array}{l}
\bullet \;\emptyset\ne X_n(W_1^{(n)} )\subset X_{n+1}(W_1^{(n+1)}),\\\\
\bullet\;\emptyset\ne X_n(W_3^{(n)} )\subset X_{n+1}(W_3^{(n+1)}).
\end{array}$$
In particular, $X\ne \emptyset\ne Y$. Also, $z\notin X\cup Y$. Thus we infer the existence of the partition 
$$T=X\cup\{z\}\cup Y.
$$ 
More precisely, let us show that $T$ is connected while $T\setminus\{z\}=X\cup Y$ is a separation, that is, $\overline{X}\cap Y=\emptyset=X\cap \overline{Y}$. Indeed, suppose on the contrary that $y\in \overline{X}\cap Y$. Then there is $N\geq n_0$ such that $y\in X_N(W_3^{(N)})$. Since $W_2^{(n)}$ separates the Hata graph $G_n$ between $W_1^{(n)}$ and $W_3^{(n)}$ for all $n\geq n_0$, we conclude that for all $n\geq N$, $y\notin X_n(W_1^{(n)})$. Also, as $y\ne z$, there exists $N_1\geq N$ such that  $y\notin X_{N_1}(W_2^{(N_1)})$. Now, 
$$\begin{array}{rcl}
\overline{X}\subset\overline{\bigcup_{n\geq n_0}X_n(W_1^{(n)})}&=&\overline{\bigcup_{n\geq N_1}X_n(W_1^{(n)})}\\
&\subset& \overline{X_{N_1}(W_1^{(N_1)})\cup X_{N_1}(W_2^{(N_1)})}\\
&=&X_{N_1}(W_1^{(N_1)})\cup X_{N_1}(W_2^{(N_1)}).
\end{array}
$$
The second inclusion above follows from the fact that, by Item $(ii)$, for all $n\geq N_1$, 
$$X_n(W_1^{(n)})\subset X_{N_1}(W_1^{(N_1)})\cup X_{N_1}(W_2^{(N_1)}).
$$ 
 However, by construction of $N_1$, $y$ is neither in $X_{N_1}(W_1^{(N_1)})$, nor in $X_{N_1}(W_2^{(N_1)})$. Thus $y\notin \overline{X}$, a contradiction. Hence we obtained that $\overline{X}\cap Y=\emptyset$.

Since we can obtain similarly that $X\cap \overline{Y}=\emptyset$, we conclude that $T\setminus\{z\}=X\cup Y$ is a separation. In other words,  $z$ is a cut point of $T$.

\end{proof}

We apply our theorem to two examples.

\begin{example}\label{ExBandtGelbrich}
We consider as a first example the tile $T$ satisfying the equation
$$AT=T+\mathcal{D}=\bigcup_{e\in\mathcal{D}}(T+e),
$$
where 
$$A=\left(\begin{array}{cc}0&3\\1&1\end{array}\right),\;\mathcal{D}=\left\{\left(\begin{array}{c}0\\0\end{array}\right),\left(\begin{array}{c}1\\0\end{array}\right),\left(\begin{array}{c}-1\\0\end{array}\right)\right\}.
$$
This tile can be found in~\cite{BandtGelbrich94} and~\cite[Figure 4]{BW}. We depict it with its neighbors on Figure~\ref{ExBG}.
\begin{figure}[h]
\includegraphics[width=7cm,height=7cm]{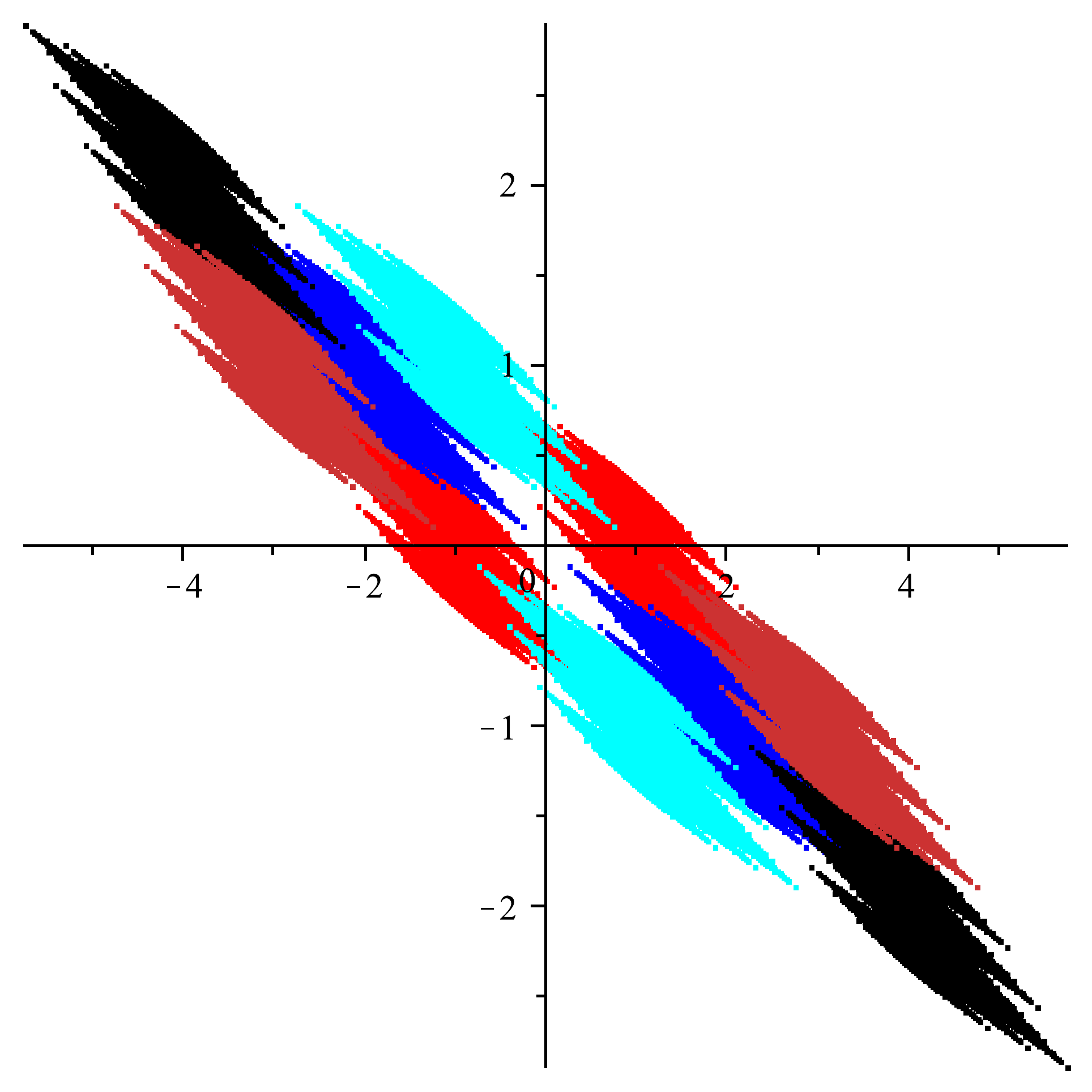}
\caption{Tiling for Example~\ref{ExBandtGelbrich}. \label{ExBG}}
\end{figure}
The tile $T$ has the following neighbors: 
\begin{equation}\label{SBandtGelbrichEx}
\mathcal{S} = \left\{\pm  \left(\begin{array}{c}1\\0\end{array}\right),\pm\left(\begin{array}{c}-2\\1\end{array}\right),
\pm \left(\begin{array}{c}-1\\1\end{array}\right),\pm\left(\begin{array}{c}-4\\2\end{array}\right),\pm\left(\begin{array}{c}-3\\1\end{array}\right)\right\}.
\end{equation}
This can be computed via well-known algorithms (see for example~\cite{ScheicherThuswaldner03}).

For $n\in\mathbb{N}$, consider the associated $n$-th Hata graph $G_n = (V_n,E_n)$. We write the elements of $V_n$ as strings of digits, \emph{i.e.}, $d_{n-1}\cdots d_0\in\mathcal{D}^\mathbb{N}$ stands here for $\sum_{k=0}^{n-1}A^kd_k$. For convenience, we will often write $d$ instead of $\left(\begin{array}{c}d\\0\end{array}\right)$ ($d\in\{-1,0,1\}$). If a digit $d\in\mathcal{D}$ is repeated  $p$ times for some $p\geq 0$, we will simply write $d^p$ instead of $\underbrace{d\cdots d}_{p\textrm{ times}}$. For $p=0$, this is just the empty word. 
 
 We check the assumptions of Theorem~\ref{cutpoint} to obtain the existence of a cut point. We define the sets $W_i^{(n)}$ for $i\in\{1,2,3\}$ and $n\geq n_0:=1$ recursively as follows. Let 
$$W_1^{(1)}  = \{-1\},\;W_2^{(1)} = \{0\},\; W_3^{(1)} = \{1\}
$$
and for $n\geq 1$, let 
 $$\begin{array}{rcl}
  W_1^{(n+1)} &=& (AW_1^{(n)}+\mathcal{D})
  \cup\{0^{n}(-1)\}\\
  
 W_2^{(n+1)} &=& \{0^{n+1}\} ,\\  
W_3^{(n+1)} &=& (AW_3^{(n)}+\mathcal{D})
  \cup\{0^n1\}.
\end{array}
$$
\begin{remark} For $n\in\mathbb{N}$, we have $0^n=0=\left(\begin{array}{c}0\\0\end{array}\right)$, but we want to stress the fact that we are considering elements of $V_n$. 
\end{remark}
The sets $W_i^{(n)}$ ($i\in\{1,2,3\}$) partition $V_n$ for all $n\geq 1$. Moreover, they satisfy  Conditions $(ii)$ and $(iii)$ of Theorem~\ref{cutpoint} with $K=1$. As $W_2^{(n)}$ consists of a single point, it is connected for all $n\geq 1$. Let us now prove the separation property of Condition $(i)$ by induction. More precisely, we shall prove the following lemma. 
\begin{lemma}
 For all $n\geq 1$,
\begin{itemize}
\item[$(a)$] $W_2^{(n)}$ separates $G_n$ between $W_1^{(n)} $ and $W_3^{(n)}$;
\item[$(b)$] 
$$\{v\in W_1^{(n)};(T+v)\cap (T+0^n)\ne\emptyset\}=\{0^{n-1}(-1),\underbrace{0^{n-2}(-1)1}_{\textrm{only for }n\geq 2}\}$$
and 
$$\{v\in W_3^{(n)};(T+v)\cap (T+0^n)\ne\emptyset\}=\{0^{n-1}1,\underbrace{0^{n-2}1(-1)}_{\textrm{only for }n\geq 2}\}.
$$
\end{itemize}

\end{lemma}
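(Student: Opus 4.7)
My plan is induction on $n$, proving (b) before (a) at each step, since (a) at level $n+1$ uses (b) at level $n$ to constrain which parent vertices in $V_n$ can contribute an edge crossing the separator. The base case $n=1$ is a direct computation: for (b), note that $(-1,0) \in \mathcal{S}$; for (a), note that $(-1,0) - (1,0) = (-2,0) \notin \mathcal{S}$.

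For the inductive step, I would first prove (b) at $n+1$. The forward direction is straightforward: both $0^n(-1)$ and $0^{n-1}(-1)1$ belong to $W_1^{(n+1)}$ (the former by construction; the latter because its parent $0^{n-1}(-1)$ lies in $W_1^{(n)}$ by the induction hypothesis on (b)), and as elements of $\mathbb{Z}^2$ they are $(-1,0)$ and $(1,-1)$, both of which lie in $\mathcal{S}$. The reverse direction is the heart of the proof: for any other $v = Aw + d \in W_1^{(n+1)}$ (with $w \in W_1^{(n)}$, $d \in \mathcal{D}$) satisfying $v \in \mathcal{S}$, one solves $Aw = s - d$ over $(s,d) \in (\mathcal{S} \cup \{0\}) \times \mathcal{D}$. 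The requirement $A^{-1}(s-d) \in \mathbb{Z}^2$, equivalently divisibility of the first coordinate of $s - d$ by $3 = |\det A|$, reduces this to a short finite enumeration that yields a small explicit list of candidate values of $w$. Intersecting this list with $W_1^{(n)}$ via the induction hypothesis on (b) forces $w = 0^{n-1}(-1)$ and then $d = 1$, recovering $v = 0^{n-1}(-1)1$. The analogue for $W_3^{(n+1)}$ follows from the involution $v \mapsto -v$, which preserves $\mathcal{D}$ and $\mathcal{S}$ and swaps $W_1^{(n)}$ with $W_3^{(n)}$.

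For (a) at $n+1$, I would argue by contradiction. Suppose some edge in $G_{n+1}$ joins $v_1 \in W_1^{(n+1)}$ to $v_3 \in W_3^{(n+1)}$, and let $w_1, w_3 \in V_n$ be their unique parents. The containment $A^{-n-1}(T+v_i) \subset A^{-n}(T+w_i)$ transfers the level-$(n+1)$ intersection to the parent level, forcing $w_1 - w_3 \in \mathcal{S} \cup \{0\}$. Then case-split on whether each $w_i$ equals $0$ or lies in $W_i^{(n)}$. The case $w_1 \in W_1^{(n)}$, $w_3 \in W_3^{(n)}$ is excluded by the induction hypothesis on (a); the case $w_1 = w_3 = 0$ forces $v_1 = -1$, $v_3 = 1$ and is excluded by $(-2,0) \notin \mathcal{S}$; the two cross-cases (say $w_1 = 0$, $w_3 \in W_3^{(n)}$) use the induction hypothesis on (b) to restrict $w_3$ to $\{(1,0), (-1,1)\}$, after which a finite check over the three children of each such $w_3$ rules out adjacency to $v_1 = -1$. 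This contradiction establishes (a).

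The main obstacle is the reverse direction of (b): producing the explicit finite list of parents $w$ whose children can be neighbors of $0$. It is a routine but unavoidable finite enumeration, grounded in the arithmetic of $A$ and $\mathcal{D}$, and it is the step where the specific geometry of this tile really enters; without it, the induction has no mechanism to cut the candidate parents down to a manageable set at each step.
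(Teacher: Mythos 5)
Your proposal is correct and takes essentially the same approach as the paper's proof: induction on $n$ with part $(b)$ as the invariant that pins down which vertices of $W_1^{(n)}$ and $W_3^{(n)}$ touch the separator, the parent--child containment $A^{-n-1}(T+v)\subset A^{-n}(T+w)$ to push intersections down one level, and finite checks against $\mathcal{S}$. The only differences are organizational: the paper verifies $n=1,2$ as base cases by reading the Hata graphs and runs the finite check parent-first (restrict $w$ via the induction hypothesis on $(b)$, then enumerate $d_0$) rather than by solving $Aw=s-d$ and filtering afterwards, and it redoes the computation for $W_3^{(n)}$ instead of invoking the symmetry $v\mapsto -v$.
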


 \begin{proof}
 The first Hata graph is shown in Figure~\ref{H1GraphBandtGelbrichEx}, the second Hata graph in Figure~\ref{H2GraphBandtGelbrichEx}. Note that,  by definition, $G_1$ has the set of vertices $V_1=\mathcal{D}=\{-1,0,1\}$, and 
 $V_2=A\mathcal{D}+\mathcal{D}$, \emph{i.e.},  $G_2$ has the set of vertices
  $$\begin{array}{rcl}
  V_2&=& \{v\in\mathbb{Z}^2;  v = \sum_{k=0}^1 A^k d_k, d_k\in\mathcal{D}\}\\
  & =& \{(-1)(-1),(-1)0,(-1)1,0(-1),00,01,1(-1),10,11 \}.
  \end{array}
  $$
  The edges are constructed using the neighbor relations of~\eqref{SBandtGelbrichEx}.

 We read from these pictures that Items $(a)$ and $(b)$ are thus true for $n\in\{1,2\}$.  Suppose now that Items  $(a)$ and $(b)$ hold true for some $n\geq 2$.  By $(a)$, using the set equation for $T$, we have
 $$(T+W_1^{(n)})\cap(T+W_3^{(n)})=\emptyset\Rightarrow (T+AW_1^{(n)}+\mathcal{D})\cap(T+AW_3^{(n)}+\mathcal{D})=\emptyset.
 $$ 
Also, 
 $$(T+0^n(-1))\cap(T+0^n1)\ne\emptyset\iff 0^n(-1)-0^n1=\left(\begin{array}{c}-1-1\\0\end{array}\right)\in\mathcal{S}.
 $$
  However, this does not happen, as can be seen from~\eqref{SBandtGelbrichEx}. Furthermore, 
   if  $w=w_1d_0\in A W_1^{(n)}+\mathcal{D}$ and $w'=0^n1$ with $w_1\in W_1^{(n)}$ and $d_0\in\mathcal{D}$ satisfy 
 $$(T+w)\cap (T+w')\ne\emptyset,
 $$
 then by the induction hypothesis  on Item $(b)$ we  have $w_1\in\{0^{n-1}(-1),0^{n-2}(-1)1\}$, and 
 $$(T+0^{n-1}(-1)d_0)\cap(T+0^{n-1}01)\ne\emptyset\iff (-1)d_0-01= \left(\begin{array}{c}d_0-1\\-1\end{array}\right)\in\mathcal{S};
 $$
 This only happens for $d_0-1\in\{1,2,3\}$, as can be seen from~\eqref{SBandtGelbrichEx}, but no choice of $d_0\in\{-1,0,1\}$ leads to these values. 
 Similarly, 
  $$(T+0^{n-2}(-1)1d_0)\cap(T+0^{n-2}001)\ne\emptyset\iff (-1)1d_0-001= \left(\begin{array}{c} -4+d_0\\ 0 \end{array}\right)\in\mathcal{S}.
 $$
 This does not happen for $d_0\in\{-1,0,1\}$, as can be seen from~\eqref{SBandtGelbrichEx}. In the same way, we can show that $(T+w)\cap (T+0^n(-1))=\emptyset$ for all $w\in AW_3^{(n)}+\mathcal{D}$. This proves that $W_2^{(n+1)}$ separates $G_{n+1}$ between $W_1^{(n+1)} $ and $W_3^{(n+1)}$ (Item $(a)$). For Item $(b)$, just note that 
$$\forall d\in\mathcal{D}, \;(T+0^n0)\cap (T+0^nd)\ne\emptyset\iff 0-d=\left(\begin{array}{c}0-d\\0\end{array}\right)\in\mathcal{S}\iff d\in\{-1,1\}. 
$$
 And if $w=w_1d_0\in A W_1^{(n)}+\mathcal{D}$ with $w_1\in W_1^{(n)}$ and $d_0\in\mathcal{D}$, then as above 
$$\begin{array}{rcl}
(T+w)\cap (T+0^{n}0)\ne\emptyset&\Rightarrow& w_1\in\{0^{n-1}(-1),0^{n-2}(-1)1\}\\
& \Rightarrow& 0^{n-1}(-1)d_0-0^{n-1}00= \left(\begin{array}{c}d_0\\-1\end{array}\right)\in\mathcal{S} \\
&&\textrm{ or } 0^{n-2}(-1)1d_0-0^{n-2}000= \left(\begin{array}{c}-3+d_0\\0\end{array}\right)\in\mathcal{S} 
\\
&\Rightarrow & w=0^{n-1}(-1)1
\end{array}
 $$
(compare with~\eqref{SBandtGelbrichEx}). Similarly, $(T+w)\cap (T+0^{n+1})\ne\emptyset$ for $w\in AW_3^{(n)}+\mathcal{D}$ occurs only possible for $w=0^{n-1}1(-1)$.

 \end{proof}

Therefore, we can apply Theorem~\ref{cutpoint} and obtain that
$$\bigcap_{n\geq 1}X_n(W_2^{(n)}) =\{0.0^\infty\},
$$
where $0.0^\infty=\sum_{k=1}^\infty A^{-k}\left(\begin{array}{c}0\\0\end{array}\right)=\left(\begin{array}{c}0\\0\end{array}\right)$ is a cut point of $T$.

\begin{figure}[h]
\begin{center}

\includegraphics[width=4cm,height=1cm]{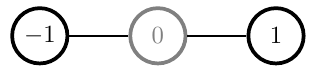}
\vskip 0.1cm
\includegraphics[width=6cm,height=5cm]{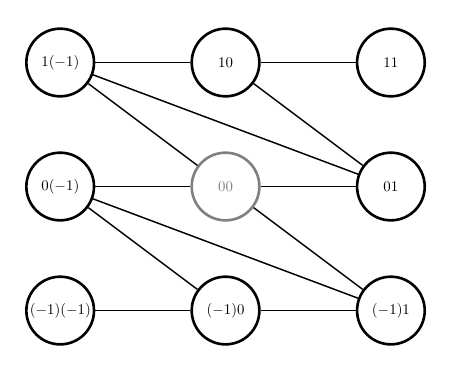}
\end{center}
\caption{First and second Hata Graphs for Example~\ref{ExBandtGelbrich}.}
\label{H1GraphBandtGelbrichEx}
\label{H2GraphBandtGelbrichEx}
\end{figure}

\end{example}

\begin{example}\label{ExCNSA6B7}
As a second example, we consider a planar tile associated with a canonical number system (\emph{CNS-tile}). It is defined similarly as in Example~\ref{ExA4B5} by $T=T(A,\mathcal{D})$ with

\[A=
\left(\begin{array}{cc}0&-7\\1&-6\end{array}\right),{\mathcal D}=\left\{\left(\begin{array}{c}i\\0\end{array}
\right); 0\le i\le 6\right\}.
\]

It is shown in~\cite{AT1} that this tile is not homeomorphic to a disk. We obtain here further information and exhibit a cut point of $T$.

The set of neighbors of $\T$ can be found in~\cite{AT1}:
\begin{equation}\label{SA6B7}
\mathcal{S}=\left\{\pm P_1,\pm P_2,\pm P_3,\pm Q_1,\pm Q_2,\pm Q_3,\pm R\right\},
\end{equation}
where
$$P_m=\left(\begin{array}{c}-5m+6\\-m+1\end{array}\right),Q_m=\left(\begin{array}{c}5m\\m\end{array}\right),R=\left(\begin{array}{c}-6\\-1\end{array}\right) \;(m\in\{1,2,3\}).
$$

We check the assumptions of Theorem~\ref{cutpoint} to obtain the existence of a cut point. To this effect, we define the sets $W_i^{(n)}$ for $i\in\{1,2,3\}$ and $n\geq n_0:=1$ recursively as follows. Let 
$$W_1^{(1)}  = \{0,1,2\},\;W_2^{(1)} = \{3\},\; W_3^{(1)} = \{4,5,6\}
$$
and for $n\geq 1$, let 
 $$\begin{array}{rcl}
  W_1^{(n+1)} &=& (AW_1^{(n)}+\mathcal{D})
  \cup\left\{\begin{array}{cl}\{3^{n}4,3^{n}5,3^{n}6\}&\textrm{ if } n \textrm{ is odd}\\\\
  \{3^{n}0,3^{n}1,3^{n}2\}&\textrm{ if } n \textrm{ is even}
  \end{array}
  \right., \\
W_2^{(n+1)} &=& \{3^{n+1}\} ,\\  
W_3^{(n+1)} &=& (AW_3^{(n)}+\mathcal{D})
  \cup\left\{\begin{array}{cl}\{3^{n}0,3^{n}1,3^{n}2\}&\textrm{ if } n \textrm{ is odd}\\\\
  \{3^{n}4,3^{n}5,3^{n}6\}&\textrm{ if } n \textrm{ is even}
  \end{array}
  \right.. 
\end{array}
$$
Note that the sets $W_i^{(n)}$ ($i\in\{1,2,3\}$) constitute a partition of $V_n$ for all $n\geq 1$. Moreover, they satisfy  Conditions $(ii)$ and $(iii)$ of the theorem with $K=1$. As $W_2^{(n)}$ consists of a single point, it is connected for all $n\geq 1$. Let us now prove the separation property of Condition $(i)$ by induction. More precisely, we shall prove the following lemma. 
\begin{lemma}
 For all $n\geq 1$,
\begin{itemize}
\item[$(a)$] $W_2^{(n)}$ separates $G_n$ between $W_1^{(n)} $ and $W_3^{(n)}$;
\item[$(b)$] 
$$\{v\in W_1^{(n)};(T+v)\cap (T+3^n)\ne\emptyset\}=\left\{\begin{array}{cl}\{3^{n-1}2\}&\textrm{ if } n \textrm{ is odd}\\\\
  \{3^{n-1}4\}&\textrm{ if } n \textrm{ is even}
  \end{array}
  \right.
$$
and 
$$\{v\in W_3^{(n)};(T+v)\cap (T+3^n)\ne\emptyset\}=\left\{\begin{array}{cl}\{3^{n-1}4\}&\textrm{ if } n \textrm{ is odd}\\\\
  \{3^{n-1}2\}&\textrm{ if } n \textrm{ is even}
  \end{array}
  \right..
$$
\end{itemize}

\end{lemma}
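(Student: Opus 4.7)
The plan is to proceed by induction on $n$, mirroring the strategy used in Example~\ref{ExBandtGelbrich} and exploiting the explicit list~(\ref{SA6B7}) of neighbor vectors. Recall that for $v\ne w$ in $\mathbb{Z}^2$ one has $(T+v)\cap(T+w)\ne\emptyset$ if and only if $v-w\in\mathcal{S}$.

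Base case $n=1$. Inspecting~(\ref{SA6B7}), the only vectors in $\mathcal{S}$ whose second coordinate vanishes are $\pm P_1=\pm\binom{1}{0}$. Hence two digits $i,j\in\mathcal{D}$ are adjacent in $G_1$ precisely when $|i-j|=1$, so $G_1$ is the path with consecutive vertices $0,1,2,3,4,5,6$. Both $(a)$ and $(b)$ are then immediate for $n=1$, with $W_2^{(1)}=\{3\}$ touching $2$ on the $W_1^{(1)}$-side and $4$ on the $W_3^{(1)}$-side.

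Inductive step. The central tool is a \emph{parent-projection principle}: if $v=Av'+d_v$ and $w=Aw'+d_w$ lie in $V_{n+1}$, then applying $A^{-1}$ to the inclusions $A^{-1}(T+v)\subset T+v'$ and $A^{-1}(T+w)\subset T+w'$ gives the implication
\[
(T+v)\cap(T+w)\ne\emptyset \;\Longrightarrow\; v'=w' \text{ or } (T+v')\cap(T+w')\ne\emptyset.
\]
For $(a)$ I would assume a hypothetical edge between some $v\in W_1^{(n+1)}$ and $w\in W_3^{(n+1)}$ and split according to the location of the parents $v',w'\in V_n$. If both parents lie in $W_1^{(n)}\cup W_3^{(n)}$, IH$(a)$ forbids adjacency or equality. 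In the remaining subcases, at least one parent is $3^n\in W_2^{(n)}$, and IH$(b)$ restricts the other parent to $\{3^{n-1}2,3^{n-1}4\}$. Using the identity $A\binom{1}{0}=\binom{0}{1}$, the difference $v-w$ takes in each subcase the shape $\binom{d_v-d_w}{\epsilon}$ with $\epsilon\in\{-1,0,1\}$ and $d_v,d_w$ restricted to prescribed parts of $\mathcal{D}$. A direct scan of~(\ref{SA6B7}), stratified by the second coordinate, shows that no such vector belongs to $\mathcal{S}$.

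For $(b)$ I would apply the same projection principle to $3^{n+1}=A\cdot 3^n+3$: any neighbor of $3^{n+1}$ in $V_{n+1}$ must have its parent in $\{3^n,3^{n-1}2,3^{n-1}4\}$. Computing $v-3^{n+1}$ for each of these three parent choices and again stratifying $\mathcal{S}$ by second coordinate shows that only the parent $3^n$ produces admissible differences, and that the only children adjacent to $3^{n+1}$ are $3^n2$ and $3^n4$. The partition rules defining $W_1^{(n+1)}$ and $W_3^{(n+1)}$ then dispatch these two vertices according to the parity of $n+1$ in precisely the pattern claimed in $(b)$. The only non-trivial aspect of the whole argument is careful case-by-case bookkeeping; once the parent-projection principle reduces every question to a finite lookup in~(\ref{SA6B7}), the verification is mechanical.
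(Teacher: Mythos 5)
Your proposal is correct and follows essentially the same route as the paper: verify the base case from the first Hata graph (where $\pm P_1$ are the only neighbor vectors with vanishing second coordinate), then induct using the projection of an intersection of level-$(n+1)$ pieces to an intersection of their level-$n$ parents, with IH$(a)$ killing the parent pairs in $W_1^{(n)}\times W_3^{(n)}$ and IH$(b)$ reducing the remaining cases to finitely many differences of the form $\bigl(\begin{smallmatrix}d_v-d_w\\ \epsilon\end{smallmatrix}\bigr)$, $\epsilon\in\{-1,0,1\}$, that are checked against~\eqref{SA6B7}. The only cosmetic difference is that the paper additionally works out $n=2$ explicitly against the second Hata graph before running the induction, which your argument does not need.
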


\begin{proof}
The first Hata graph can be seen in Figure~\ref{H1GraphA6B7}. We have $V_1=\mathcal{D}$ and the edges are constructed according to~\eqref{SA6B7}. As can be seen on this graph, $W_2^{(1)}$ separates $G_1$ between $W_1^{(1)}$ and $W_3^{(1)}$. Also, 
$$\{v\in W_1^{(1)};(T+v)\cap (T+3)\ne\emptyset\}=\{2\}\textrm{ and } \{v\in W_3^{(1)};(T+v)\cap (T+3)\ne\emptyset\}=\{4\}.
$$
Hence, Items $(a)$ and $(b)$ are fulfilled for $n=1$.

Now,  by definition, $V_2=A\mathcal{D}+\mathcal{D}$, \emph{i.e.},  $G_2$ has the set of vertices 
$$\begin{array}{rcl}
V_2&=& \{v\in\mathbb{Z}^2;  v = \sum_{k=0}^1 A^k d_k, d_k\in\mathcal{D}\} \\
&=& \{00,01,\ldots, 06, 10,11,\ldots, 16,20,21,\ldots,26, \ldots, 60,61,\ldots 66\},
\end{array}
$$
and 
$$\begin{array}{rcl}
  W_1^{(2)} &=& (AW_1^{(1)}+\mathcal{D})\cup\{34,35,36\}, \\
W_2^{(2)} &=& \{33\} ,\\  
W_3^{(2)} &=& (AW_3^{(1)}+\mathcal{D})\cup\{30,31,32\}.
\end{array}
$$
The following considerations can be directly seen from the second Hata graph $G_2$ depicted  in Figure~\ref{H2GraphA6B7}.
 First, using the set equation for $T=T(A,\mathcal{D})$, we have
 $$(T+W_1^{(1)})\cap (T+W_3^{(1)})=\emptyset\Rightarrow (T+AW_1^{(1)}+\mathcal{D})\cap(T+AW_3^{(1)}+\mathcal{D})=\emptyset.
 $$ 
Second,  
$$(T+3d)\cap(T+3d')\ne\emptyset\iff 3d-3d'=\left(\begin{array}{c}d-d'\\0\end{array}\right)\in\mathcal{S}.
 $$
  However, this does not happen for $d\in\{4,5,6\}$ and $d'\in\{0,1,2\}$, as can be seen from~\eqref{SA6B7}. Third, if  $w=d_1d_0\in A W_1^{(1)}+\mathcal{D}$ and $w'=3d$ with $d_1,d\in\{0,1,2\}$ and $d_0\in\mathcal{D}$ satisfy 
 $$(T+w)\cap (T+w')\ne\emptyset,
 $$
 then by Item $(b)$ for $n=1$ we conclude that $d_1=2$, and 
 $$(T+2d_0)\cap(T+3d)\ne\emptyset\iff 2d_0-3d= \left(\begin{array}{c}d_0-d\\2-3\end{array}\right)=\left(\begin{array}{c}d_0-d\\-1\end{array}\right)\in\mathcal{S}.
 $$
This only happens for $d_0-d\in\{-4,-5,-6\}$, as can be seen from~\eqref{SA6B7}, but no choice of $d_0$ and $d$ leads to these values. Similarly, $(T+w)\cap (T+w')=\emptyset$ for all $w\in AW_3^{(1)}+\mathcal{D}$ and $w'\in\{34,35,36\}$. This proves that Item $(a)$ is fulfilled for $n=2$. For Item $(b)$, just note that 
$$\forall d\in\mathcal{D}, \;(T+33)\cap (T+3d)\ne\emptyset\iff 33-3d=\left(\begin{array}{c}3-d\\0\end{array}\right)\in\mathcal{S}\iff d\in\{2,4\}. 
$$
 And if $w=d_1d_0\in A W_1^{(1)}+\mathcal{D}$ with $d_1\in\{0,1,2\}$ and $d_0\in\mathcal{D}$, then as above 
$$(T+w)\cap (T+33)\ne\emptyset\Rightarrow d_1=2 \textrm{ and } 2d_0-33= \left(\begin{array}{c}d_0-3\\-1\end{array}\right)\in\mathcal{S},
 $$
 which does not happen. Similarly, $(T+w)\cap (T+33)=\emptyset$ for $w\in AW_3^{(1)}+\mathcal{D}$.

 By the above considerations, for $n\in\{1,2\}$,  Items $(a)$ and $(b)$ are fulfilled. Suppose that  Items $(a)$ and $(b)$ hold true for some $n\geq 1$. Assume first that $n$ is odd, that is, 
 $$\begin{array}{rcl}
  W_1^{(n+1)} &=& (AW_1^{(n)}+\mathcal{D}) \cup\{3^{n}4,3^{n}5,3^{n}6\},\\
 W_2^{(n+1)} &=& \{3^{n+1}\} ,\\  
W_3^{(n+1)} &=& (AW_3^{(n)}+\mathcal{D})  \cup\{3^{n}0,3^{n}1,3^{n}2\}.
\end{array}
$$
 By the separation property, using the set equation for $T$, we have
 $$(T+W_1^{(n)})\cap(T+W_3^{(n)})=\emptyset\Rightarrow (T+AW_1^{(n)}+\mathcal{D})\cap(T+AW_3^{(n)}+\mathcal{D})=\emptyset.
 $$ 
Also, 
 $$(T+3^nd)\cap(T+3^nd')\ne\emptyset\iff 3^nd-3^nd'=\left(\begin{array}{c}d-d'\\0\end{array}\right)\in\mathcal{S}.
 $$
  However, this does not happen for $d\in\{4,5,6\}$ and $d'\in\{0,1,2\}$, as can be seen from~\eqref{SA6B7}. Furthermore, 
   if  $w=w_1d_0\in A W_1^{(n)}+\mathcal{D}$ and $w'=3^nd$ with $w_1\in W_1^{(n)}$, $d\in\{0,1,2\}$ and $d_0\in\mathcal{D}$ satisfy 
 $$(T+w)\cap (T+w')\ne\emptyset,
 $$
 then by the induction hypothesis  on Item $(b)$ we conclude that $w_1=3^{n-1}2$, and 
 $$(T+3^{n-1}2d_0)\cap(T+3^{n-1}3d)\ne\emptyset\iff 2d_0-3d= \left(\begin{array}{c}d_0-d\\2-3\end{array}\right)=\left(\begin{array}{c}d_0-d\\-1\end{array}\right)\in\mathcal{S}.
 $$
This only happens for $d_0-d\in\{-4,-5,-6\}$, as can be seen from~\eqref{SA6B7}, but no choice of $d_0$ and $d$ leads to these values. Similarly, $(T+w)\cap (T+w')=\emptyset$ for all $w\in AW_3^{(n)}+\mathcal{D}$ and $w'\in\{34,35,36\}$. This proves that $W_2^{(n+1)}$ separates $G_{n+1}$ between $W_1^{(n+1)} $ and $W_3^{(n+1)}$ (Item $(a)$). For Item $(b)$, just note that 
$$\forall d\in\mathcal{D}, \;(T+3^n3)\cap (T+3^nd)\ne\emptyset\iff 3-d=\left(\begin{array}{c}3-d\\0\end{array}\right)\in\mathcal{S}\iff d\in\{2,4\}. 
$$
 And if $w=w_1d_0\in A W_1^{(n)}+\mathcal{D}$ with $w_1\in W_1^{(n)}$ and $d_0\in\mathcal{D}$, then as above 
$$(T+w)\cap (T+3^{n-1}33)\ne\emptyset\Rightarrow w_1=3^n2 \Rightarrow 2d_0-33= \left(\begin{array}{c}d_0-3\\-1\end{array}\right)\in\mathcal{S},
 $$
 which does not happen. Similarly, $(T+w)\cap (T+3^{n+1})=\emptyset$ for $w\in AW_3^{(n)}+\mathcal{D}$.

The argument runs similarly as above if $n$ is even.

\end{proof}

Therefore, we can apply Theorem~\ref{cutpoint} and obtain that
$$\bigcap_{n\geq 1}X_n(W_2^{(n)}) =\{0.3^\infty\},
$$
where $0.3^\infty=\sum_{k=1}^\infty A^{-k}\left(\begin{array}{c}3\\0\end{array}\right)=\left(\begin{array}{c}-3/2\\-3/14\end{array}\right)$ is a cut point of $T$.
 
% \begin{figure}[h]
%\includegraphics[width=8cm,height=5cm]{TileA6B7.eps}
%\caption{The tile of Example~\ref{ExCNSA6B7} and a cut point.}\label{fig:TileA6B7}
%\end{figure}

\begin{figure}
\includegraphics[width=10cm,height=1.5cm]{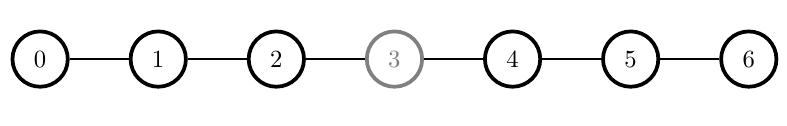}

\caption{First Hata Graph for Example~\ref{ExCNSA6B7}.}
\label{H1GraphA6B7}
\end{figure} 

\begin{figure}

\includegraphics[width=8.5cm,height=10cm]{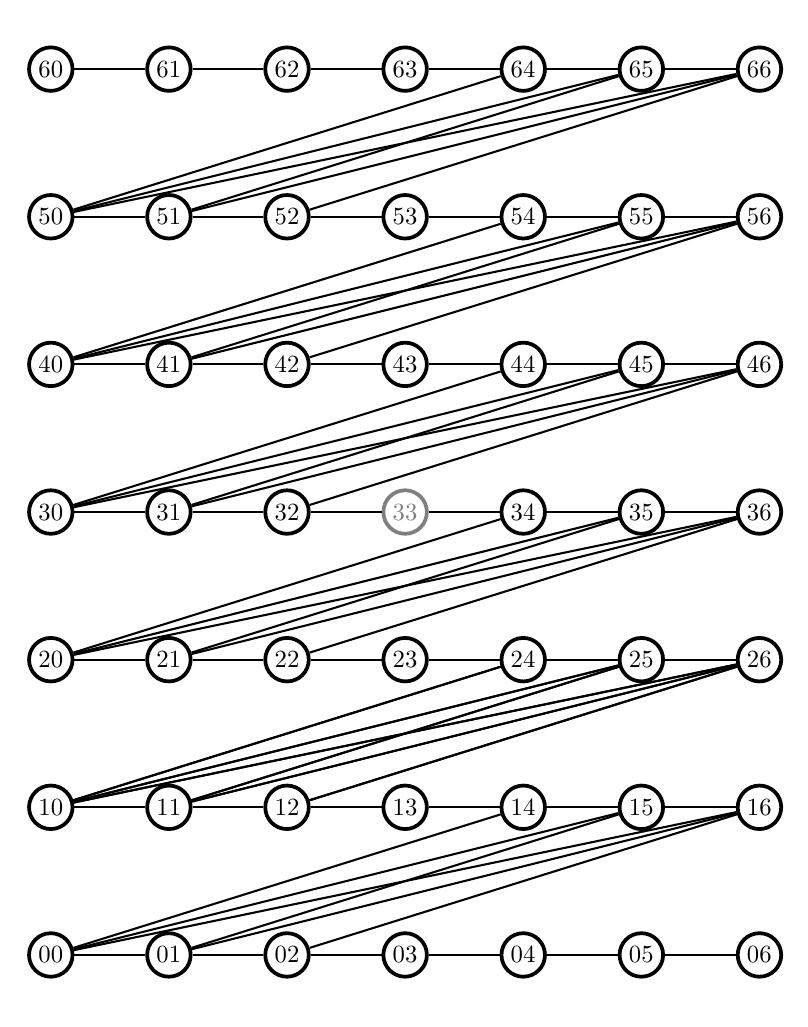}
\caption{Second Hata Graph for Example~\ref{ExCNSA6B7}.}
\label{H2GraphA6B7}
\end{figure}

\end{example}

\end{section}

\bibliographystyle{amsplain}

\end{document}